\def\R{{\mathbb R}}
\def\<{\langle}
\def\>{\rangle}
\def\P{\mathbb P}
\def\E{\mathbb E}
\def\0{\underline 0}
\def\e{\underline e}
\def\1{\underline 1}
\newcommand{\bel}{\begin{equation}\label}
\newcommand{\ee}{\end{equation}}
      \newtheorem{theorem}{Theorem}[section]
       \newtheorem{remark}{Remark}[section]
\theoremstyle{definition}
\title[Regression characterization of the gamma and Kummer laws]{Regression version of the Matsumoto-Yor type characterization of the Gamma and Kummer distributions}
\author{Jacek Weso\l owski}
\address{Wydzia{\l} Matematyki i Nauk Informacyjnych, Politechnika Warszawska, Warszawa, Poland}
\email{wesolo\@ mini.pw.edu,pl}
\date{\today}
\begin{document}
\begin{abstract}
In this paper we study a Matsumoto-Yor type property for the gamma and Kummer independent variables discovered in Koudou and Vallois (2012). We prove that constancy of regressions of $U=(1+(X+Y)^{-1})/(1+X^{-1})$ given $V=X+Y$ and of $U^{-1}$ given $V$, where $X$ and $Y$ are independent and positive random variables, characterizes the gamma and Kummer distributions. This result completes characterizations by independence of $U$ and $V$ obtained, under smoothness assumptions for densities, in Koudou and Vallois (2011, 2012). Since we work with differential equations for the Laplace transforms, no density assumptions are needed.
\end{abstract}

\maketitle
\section{Introduction}
Let $X$ and $Y$ be independent random variables. There are several well known settings in which $U=\psi(X,Y)$ and $V=X+Y$ are also independent. Related characterizations of distributions of $X$ and $Y$ by properties of independence of $X$ and $Y$ and independence of $U$ and $V$ have been also studied. The most prominent seem to be: \begin{itemize}
\item Bernstein (1941) characterization of the normal law by independence of $U=X-Y$ and $V$,
\item Lukacs (1956) characterization of the gamma law by independence of $U=X/Y$ and $V$.
\end{itemize}

In the end of 1990's  a new independence phenomenon of this kind, called Matsumoto-Yor property, see e.g. Stirzaker (2005), p. 43,  was discovered. It says that for $X$ with a GIG (generalized inverse Gaussian) law and independent $Y$ with a gamma law (both distributions with suitably adjusted parameters), random variables $U=1/X-1/(X+Y)$ and $V$ are independent. This elementary property was identified while the authors analyzed structure of functionals of  Brownian motion - see Matsumoto and Yor (2001, 2003). A related characterization of the GIG and gamma laws by independence of $X$ and $Y$ and of $U$ and $V$ was obtained in Letac and Weso\l owski (2000). Both these results: the Matsumoto-Yor property and the characterization were generalized in several directions. Matrix variate analogues were studied e.g. in Letac and Weso\l owski (2000), Weso\l owski (2002) and Massam and Weso\l owski (2006) - the last one including a relation with conditional structure of Wishart matrices. Recently it has been extended to symmetric cones setting in Ko\l odziejek (2014). Multivariate versions related to specific transformations governed by a tree were considered in Barndorff-Nielsen and Koudou (1998), Massam and Weso\l owski (2004) and  Koudou (2006) . Further connections with (exponential) Brownian motion were investigated in Weso\l owski and Witkowski (2007) and Matsumoto et al. (2009). There are also regression versions of Matsumoto-Yor typ characterizations, as given in Seshadri and Weso\l owski (2001),  Weso\l owski (2002) and Chou and Huang (2004). A survey of these results together with other characterizations of the GIG law can be found in a recent paper Koudou and Ley (2014).

In 2009 Koudou and Vallois tried to generalize Matsumoto-Yor property by a search of distributions of independent $X$ and $Y$ and functions $f$ such that $V=f(X+Y)$ and $U=f(X)-f(X+Y)$ are independent. Their research lead to a discovery of another pair $U=\psi(X,Y)$ and $V=X+Y$ with independence property:
Assume that $X$ and $Y$ are independent random variables, $X$ has the Kummer distribution $\mathrm{K}(a,b,c)$ with the density
$$
f_X(x)\propto \tfrac{x^{a-1}e^{-cx}}{(1+x)^{a+b}}\,I_{(0,\infty)}(x), \qquad a,b,c>0,
$$
and $Y$ has the gamma distribution $\mathrm{G}(b,c)$ with the density
$$
f_Y(y)\propto y^{b-1}e^{-cy}I_{(0,\infty)}(y).
$$
Then, see Koudou and Vallois (2012), random variables
\bel{juvi}
U=\tfrac{1+\tfrac{1}{X+Y}}{1+\tfrac{1}{X}}\qquad \mbox{and}\qquad V=X+Y
\ee
are independent, $U$ has the beta first kind distribution $\mathrm{B}_I(a,b)$ with the density
$$
f_U(u)\propto u^{a-1}(1-u)^{b-1}I_{(0,1)}(u)
$$
and $V$ has the Kummer distribution, $\mathrm{K}(a+b,-b,c)$. (Note that the Kummer distribution $K(\alpha,\beta,\gamma)$ is well-defined iff $\alpha,\gamma>0$ and $\beta\in\R$.)

It is an interesting question if a theory, similar to the one for the original Matsumoto-Yor property described in the literature recalled above, can be developed for this new independence property. There have already been some successful efforts in this direction. The property was extended to matrix variate distributions in Koudou (2012). It is also known, see Koudou and Vallois (2011, 2012), that, under appropriate smoothness assumptions on densities, a characterization counterpart of the property holds: if $X$ and $Y$ are independent positive random variables, and $U$ and $V$, given by \eqref{juvi}, are also independent then $X\sim \mathrm{K}(a,b,c)$ and $Y\sim \mathrm{G}(b,c)$ for some positive constants $a,b,c$. These smoothness restrictions require that the densities of $X$ and $Y$ are strictly positive on $(0,\infty)$ and either twice differentiable or their logarithms are locally integrable.  Letac (2009) conjectured that such a characterization is possibly true with no assumptions on densities.  In this note we contribute further to this development following the characterization  path. Our aim is to show a characterization of the gamma and Kummer laws without referring to densities at all. Actually, we will consider constancy of regressions condition which, up to necessary moment assumption, is weaker than independence.

\section{Regression characterization}
Our main result is a characterization of the Kummer and gamma laws by constancy of regressions of $U$ and $U^{-1}$ given $V$ in the setting described in \eqref{juvi}. Since $U\in(0,1)$ $\P$-a.s. $\E\,U<\infty$, and one can consider conditional moment $\E(U|V)$ without any additional restrictions. This is not the case of $\E(U^{-1}|V)$ since, a priori, the moment $\E\,U^{-1}$ may not be finite.  Since
$$
U^{-1}=\tfrac{1+X}{X}\,\tfrac{X+Y}{1+X+Y}\le 1+\tfrac{1}{X}
$$
we have $\E\,U^{-1}\le 1+\E\,X^{-1}$. So, under the assumption $\E\,X^{-1}<\infty$ the conditional moment $\E(U^{-1}|V)$ is well defined.

Now we are ready to state the main result of this note.

\begin{theorem}\label{REG KV}
Let $X$ and $Y$ be independent positive non-degenerate random variables and $\E\,X^{-1}<\infty$. Define $U$ and $V$ through \eqref{juvi}. If
\bel{reg}
\E(U|V)=\alpha \qquad \mbox{and}\qquad \E(U^{-1}|V)=\beta
\ee
for real constants $\alpha$ and $\beta$ then there exists a constant $c>0$ such that $$X\sim \mathrm{K}\left(1+\tfrac{1-\alpha}{\alpha\beta},\tfrac{(1-\alpha)(\beta-1)}{\alpha\beta},c\right)\qquad\mbox{and}\qquad Y\sim\mathrm{G}\left(\tfrac{(1-\alpha)(\beta-1)}{\alpha\beta},\,c\right).$$
\end{theorem}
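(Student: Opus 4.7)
My plan is to convert both regression identities in \eqref{reg} into integral equations for the Laplace transforms $\phi(s)=\E e^{-sX}$ and $\psi(s)=\E e^{-sY}$, reduce these by differentiation to an autonomous ODE for the logarithmic derivative of $\psi$, and from there identify $Y$ as gamma and $X$ as Kummer.

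Since $U=X(V+1)/[V(X+1)]$, condition \eqref{reg} is equivalent to the pointwise identities
\begin{equation*}
\E\bigl[(X+1)^{-1}\mid V\bigr]=(1-\alpha)+\tfrac{\alpha}{V+1},\qquad \E\bigl[X^{-1}\mid V\bigr]=(\beta-1)+\tfrac{\beta}{V}.
\end{equation*}
Testing each against $g(V)=e^{-sV}$, invoking independence of $X,Y$ together with $(X+k)^{-1}=\int_0^\infty e^{-t(X+k)}\,dt$ for $k\in\{0,1\}$, leads (after dividing the first by $e^s$) to the two integral equations
\begin{equation*}
\psi(s)\int_s^\infty e^{-u}\phi(u)\,du=(1-\alpha)e^{-s}\phi(s)\psi(s)+\alpha\int_s^\infty e^{-u}\phi(u)\psi(u)\,du,
\end{equation*}
\begin{equation*}
\psi(s)\int_s^\infty \phi(u)\,du=(\beta-1)\phi(s)\psi(s)+\beta\int_s^\infty \phi(u)\psi(u)\,du.
\end{equation*}
The hypothesis $\E X^{-1}<\infty$ guarantees convergence of all integrals, and complete monotonicity of $\phi,\psi$ makes every function in sight smooth on $(0,\infty)$.

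Differentiating each equation in $s$ collapses the term $\int_s^\infty \phi\psi\,du$ to $-\phi\psi$ and reduces the pair to $\psi'M=(1-\alpha)e^{-s}(\phi\psi)'$ and $\psi'\Phi=(\beta-1)[(\phi\psi)'-\phi\psi]$, where $M(s)=\int_s^\infty e^{-u}\phi(u)\,du$ and $\Phi(s)=\int_s^\infty\phi(u)\,du$. Setting $A=\psi'/\psi$, $B=\phi'/\phi$ and dividing by $\phi\psi$ yields the algebraic relations $A(M/\phi)=(1-\alpha)e^{-s}(A+B)$ and $A(\Phi/\phi)=(\beta-1)(A+B-1)$. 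Differentiating each and substituting the elementary ODEs $(\Phi/\phi)'=-1-(\Phi/\phi)B$ and $(M/\phi)'=-e^{-s}-(M/\phi)B$, together with the algebraic relations themselves, produces two expressions for $B'$; equating them cancels every term containing $B$ or $\phi$ and leaves the autonomous equation
\begin{equation*}
\frac{A'}{A^{2}}\;=\;\frac{1}{1-\alpha}-\frac{1}{\beta-1}-1,
\end{equation*}
whose right-hand side is a positive constant (positivity follows from $\alpha\beta>1$, itself forced by Jensen's inequality and non-degeneracy of $U$). Integration using $A(0)=-\E Y<0$ forces $A(s)=-b/(s+c)$ for explicit $b,c>0$, so $\psi(s)=(1+s/c)^{-b}$ and $Y\sim\mathrm{G}(b,c)$.

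With $\psi$ identified, substituting $A=-b/(s+c)$ into $A(\Phi/\phi)=(\beta-1)(A+B-1)$ and differentiating once more turns it into a linear second-order ODE for $\phi$ with rational coefficients in $s$. A short Laplace-transform computation starting from the density-level identity $x(1+x)f_X'(x)=[(a-1)-(1+b+c)x-cx^2]f_X(x)$ shows this ODE to be exactly $(s+c)\phi''(s)-(s+b+c-1)\phi'(s)-a\phi(s)=0$, the equation satisfied by the Laplace transform of $\mathrm{K}(a,b,c)$; matching coefficients pins down $a$ in terms of $\alpha,\beta$, and uniqueness of the Laplace transform identifies $X$ as Kummer. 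The main obstacle is the elimination producing the closed ODE for $A$: because of the structural asymmetry of the two integral equations (the factor $e^{-s}$ in the first, and the coefficient mismatch between $(1-\alpha,\alpha)$ and $(\beta-1,\beta)$), the three unknowns $B$, $\Phi/\phi$, $M/\phi$ do not cancel by a direct subtraction but only after passing through the derived expressions for $B'$. Subsidiary technical points are the justification of differentiation under the integral sign, ruling out the degenerate cases $\alpha=1$ and $\beta=1$ (easily shown incompatible with non-degeneracy of $X$ and $Y$), and verifying positivity of the resulting parameters.
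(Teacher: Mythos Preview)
Your approach mirrors the paper's almost exactly: both convert the regression conditions into Laplace-transform identities, differentiate to obtain the relations you write as $\psi'M=(1-\alpha)e^{-s}(\phi\psi)'$ and $\psi'\Phi=(\beta-1)[(\phi\psi)'-\phi\psi]$ (these are the paper's equations (EQ1) and (EQ2), up to the sign convention in $e^{\pm sX}$), and then eliminate to reach the gamma equation $A'/A^{2}=\text{const}$, which is equivalent to the paper's $MM''=(a-b)(M')^{2}$. Your elimination via two expressions for $B'$ is organized a little differently from the paper's manipulation of $K,L,M,G$, but it lands on the same ODE, and the subsequent reduction to a second-order linear equation for $\phi$ is likewise parallel.

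There is, however, a genuine gap at the final identification step. The equation $(s+c)\phi''-(s+b+c-1)\phi'-a\phi=0$ is second order, so its solution space is two-dimensional; knowing that the Kummer Laplace transform solves it does not by itself force $\phi$ to equal that transform, and ``uniqueness of the Laplace transform'' is not the operative principle here. The paper handles this explicitly: after the substitution $t=c-s$ one obtains the confluent hypergeometric equation, whose two independent solutions are ${}_1F_1(1+bp,\,1-p,\,t)$ and the Tricomi function $U(1+bp,\,1-p,\,t)$. The ${}_1F_1$ branch has asymptotic order $e^{t}t^{(1+p)b}$ as $t\to\infty$, which is incompatible with $\phi$ being the Laplace transform of a probability measure on $(0,\infty)$ (such a transform stays bounded by $1$ for $s\ge 0$); hence only the $U$-solution survives, and that is precisely the Kummer transform. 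Your sketch needs an argument of this kind---using boundedness or complete monotonicity of $\phi$ to discard the exponentially growing branch---before you can conclude that $X$ is Kummer.
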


\begin{proof}
First, rewrite the equations \eqref{reg} as
$$
\E\left(\left.\tfrac{X}{1+X}\right|X+Y\right)=\alpha\tfrac{X+Y}{1+X+Y}\qquad\mbox{and}\qquad
\E\left(\left.\tfrac{1+X}{X}\right|X+Y\right)=\beta\tfrac{1+X+Y}{X+Y}.
$$
Equivalently, we have
\bel{reg1}
\E\left(\left.\tfrac{1}{1+X}\right|X+Y\right)=1-\alpha+\tfrac{\alpha}{1+X+Y}
\ee
and
\bel{reg2}
\E\left(\left.\tfrac{1}{X}\right|X+Y\right)=\beta-1+\tfrac{\beta}{X+Y}.
\ee
The equation \eqref{reg1} implies
\bel{eq1}
\E\,\tfrac{e^{s(1+X+Y)}}{1+X}=(1-\alpha)\E\,e^{s(1+X+Y)}+\alpha\E\,\tfrac{e^{s(1+X+Y)}}{1+X+Y}
\ee
at least for $s\le 0$.

Similarly, from \eqref{reg2} we get the equation
\bel{eq2}
\E\,\tfrac{e^{s(X+Y)}}{X}=(\beta-1)\E\,e^{s(X+Y)}+\beta\E\,\tfrac{e^{s(X+Y)}}{X+Y},\quad s\le 0.
\ee

Differentiating \eqref{eq1} with respect to $s$ (it is possible at least for $s<0$) we obtain
$$
\E\,e^{s(1+X+Y)}+\E\,\tfrac{Y}{1+X}\,e^{s(X+Y+1)}=(1-\alpha)\E\,(1+X+Y)e^{s(1+X+Y)}+\alpha\,\E\,e^{s(X+Y+1)}.
$$
After dividing by $e^s$  both sides of the above equation and canceling the term $\E\,e^{s(X+Y)}$ we arrive at
$$
\E\,\tfrac{e^{sX}}{1+X}\,\E\,Ye^{sY}=(1-\alpha)\left(\E\,Xe^{sX}\,\E\,e^{sY}+\E\,e^{sX}\,\E\,Ye^{sY}\right).
$$
This equation can be written as
\bel{EQ1}
e^{-s}K\,M'=(1-\alpha)(L\,M)',
\ee
where
$$K(s)=\E\,\tfrac{e^{s(1+X)}}{1+X},\qquad L(s)=\E\,e^{sX}\qquad \mbox{and}\qquad M(s)=\E\,e^{sY}.$$

On the other hand differentiating \eqref{eq2} we get
$$
\E\,e^{s(X+Y)}+\E\,\tfrac{Y}{X}\,e^{s(X+Y)}=(\beta-1)\E\,(X+Y)e^{s(X+Y)}+\beta\,\E\,e^{s(X+Y)}.
$$
Consequently,
$$
\E\,\tfrac{e^{sX}}{X}\,\E\,Y\e^{sY}=(\beta-1)\left(\E\,Xe^{sX}\,\E\,e^{sY}+\E\,e^{sX}\,\E\,Ye^{sY}+\E\,e^{sX}\,\E\,e^{sY}\right).
$$
Therefore
\bel{EQ2}
G\,M'=(\beta-1)((L\,M)'+L\,M),
\ee
where
$$
G(s)=\E\,\tfrac{e^{sX}}{X}.
$$

By deriving the formula for $(LM)'$ from  \eqref{EQ1} and \eqref{EQ2} get
\bel{EQ}
ae^{-s}\,K\,M'=bG\,M'-L\,M, \ee
with $a=(1-\alpha)^{-1}$ and $b=(\beta-1)^{-1}$. Differentiate \eqref{EQ} to get
$$
-ae^{-s}KM'+ae^{-s}K'M'+ae^{-s}KM''=bG'M'+bGM''-(LM)'.
$$
Note that $G'=L=e^{-s}K'$. Therefore the above equation together with \eqref{EQ1} and \eqref{EQ2}, after multiplying both sides by $M'$ implies
$$
-(LM)'M'+aLM'^2+(LM)'M''=bLM'^2+((LM)'+LM)M''-(LM)'M'
$$
which after cancelations (which are allowed in a left neighborhood of zero)  gives
$$
MM''=(a-b)M'^2.
$$
Note that
$$
a-b=\tfrac{1}{1-\alpha}+\tfrac{1}{1-\beta}=\tfrac{2-\alpha-\beta}{(1-\alpha)(1-\beta)}=1+\tfrac{\alpha\beta-1}{(1-\alpha)(\beta-1)}=:1+\tfrac{1}{p}
$$
and due to obvious inequalities: $\alpha<1$, $\beta>1$ and $\alpha\beta>1$, we conclude that  $p>0$. Consequently, $Y$ has a gamma distribution $\mathrm{G}(p,\,c)$, where $c$ is a positive constant.

Now we differentiate equation \eqref{EQ2} for $s<0$ getting
$$
b G'M'+bGM''=(LM)''+(LM)'.
$$
Multiplying both sides by $M'$ and using again \eqref{EQ2} we arrive at
$$
bLM'^2+((LM)'+LM)M''=(LM)''M'+(LM)'M'
$$
which yields
$$
L''\tfrac{M'}{M}+L'\left[2\left(\tfrac{M'}{M}\right)^2+\tfrac{M'}{M}-\tfrac{M''}{M}\right]
-L\left[\tfrac{M''}{M}-(1-b)\left(\tfrac{M'}{M}\right)^2\right]=0.
$$
After inserting known values for $M$, $M'$ and $M''$ the above equation transforms into
$$
(c-s)L''(s)+(1-p+c-s)L'(s)-(1+bp)L(s)=0,\qquad s\le 0.
$$
Change the variable $t:=c-s$ and define $N(t)=L(c-t)$. It follows that
$$
tN''(t)+(1-p-t)N'(t)-(1+bp)N(t)=0\qquad t\ge c.
$$
We read two linearly independent solutions of this equation from Abramovitz and Stegun (1965), Ch. 13. One of these solutions is the generalized hypergeometric function $$N(t)=M(1+bp,1-p,t)=_1\hspace{-1mm}F_1(1+bp,1-p,t)$$ which is of the order $e^tt^{(1+p)b}$ for $t\to \infty$, see (13.1.4) in Abramovitz and Stegun (1965), and thus tends to infinity with $s\to-\infty$ and thus $t=c-s\to \infty$. In the case we consider this is impossible since the Laplace transform of negative argument $s$ of positive probability measure has to be bounded. The second solution
$$
N(t)=U(1+bp,1-p,t)=C\int_0^{\infty}\,e^{-tx}\tfrac{x^{bp}}{(1+x)^{p(b+1)+1}}\;dx,
$$
yields
$$
L(s)=C\int_0^{\infty}\,e^{sx}\tfrac{x^{bp}}{(1+x)^{p(b+1)+1}}e^{-cx}\;dx,
$$
which is a Laplace transform of the Kummer $\mathrm{K}(bp+1,p,c)$ distribution.
\end{proof}
\begin{remark}
Recall that $U\sim\mathrm{Beta}_I(1+bp,p)$ and thus $\E\,X^{-1}<\infty$, as expected. Moreover, since $p>1$ then also $\E\,(1-U)^{-1}<\infty$.
\end{remark}
\begin{remark}
It still not clear if independence of $U$ and $V$ for independent, positive and non-degenerate $X$ and $Y$ without any additional assumptions  characterizes the gamma and Kummer laws. Theorem 1 answers the question under additional restriction that $\E\,U^{-1}<\infty$.
\end{remark}
\begin{remark}
Since $U$ as defined in \eqref{juvi} is $(0,1)$ valued random variable, without any additional moment assumptions we can write regressions conditions of the form
$$
\E\left(\left.(1-U)^k\right|V\right)=\alpha_k
$$
for some positive $k$'s and $\alpha_k$'s. Obviously, such conditions are weaker than independence. A little of algebra allows to see that the above condition is equivalent to
$$
\E\left(\left.\tfrac{Y^k}{(1+X)^k}\right|X+Y\right)=\alpha_k(X+Y)^k.
$$
However, we failed to prove characterization assuming the above conditions for, say, $k=1,2$.
\end{remark}

\vspace{5mm}\noindent\small {\bf Acknowledgement.} This research has been partially supported by NCN grant No. 2012/05/B/ST1/00554. I am grateful to G. Letac for sending me his unpublished paper on the Kummer distribution.

\vspace{5mm}\noindent
{\bf References}

\begin{enumerate}

\item {\sc Abramovitz, M., Stegun, I.A.} {\em Handbook of Mathematical Functions with Formulas, Graphs and Mathematical Tables}, National Bureau of Standards, Applied Mathematics Series {\bf 55}, Washington, 1964.

\item {\sc Barndorff-Nielsen, O.E., Koudou, A.E.} Trees with random conductivities and the (reciprocal) inverse Gaussian distribution. {\em Adv. Appl. Probab.} {\bf 30} (1998), 409-424.

\item {\sc Bernstein, S.N.} On a property which characterizes Gaussian distribution. {\em Zap. Leningrad Polytech. Inst.} {\bf 217(3)} (1941), 21-22 (in Russian).

\item {\sc Chou, C.-W, Huang, W.-J.} On characterizations of the gamma and generalized inverse Gaussian distributions. {\em Statist. Probab. Lett.} {\bf 69} (2004), 381-388.

\item {\sc Ko\l odziejek, B.} The Matsumoto-Yor property and its converse on symmetric cones. {\em arXiv} {\bf 1409.5256} (2014), 1-10.

\item {\sc Koudou, A.E.} A link between the Matsumoto-Yor property and an independence property on trees. {\em Statist. Probab. Lett.} {\bf 76} (2006), 1097-1001.

\item {\sc Koudou, A.E.} A Matsumoto-Yor property for Kummer and Wishart matrices. {\em Statist. Probab. Lett.} {\bf 82(11)} (2012), 1903-1907.

\item {\sc Koudou, A.E., Ley, C.} Characterizations of GIG laws: a survey. {\em Probab. Surv.} {\bf 11} (2014), 161-176.

\item {\sc Koudou, A.E., Vallois, P.} Which dsitributions have the Matsumoto-Yor property? {\em Electr. Comm. Probab.} {\bf 16} (2011), 556-566.

\item {\sc Koudou, A.E., Vallois, P.} Independence properties of the Matsumoto-Yor type. {\em Bernoulli} {\bf 18(1)} (2012), 119-136.

\item {\sc Letac, G.} Kummer distributions. {\em Unpublished manuscript} (2009), 1-15.

\item {\sc Letac, G., Weso\l owski, J.} An independence property for the product of  GIG and gamma laws. {\em Ann. Probab.} {\bf 28} (2000), 1371-1383.

\item {\sc Lukacs, E.} A characterization of the gamma distribution. {\em Ann. Math. Statist.} {\bf 26} (1955), 319-324.

\item {\sc Massam, H., Weso\l owski, J.} The Matsumoto-Yor property on trees. {\em Bernoulli} {\bf 10} (2004), 685-700.

\item {\sc Massam, H., Weso\l owski, J.} The Matsumoto-Yor property and the structure of the Wishart distribution. {\em J. Mutivar. Anal.} {\bf 97} (2006), 103-123.

\item {\sc Matsumoto, H., Weso\l owski, J., Witkowski, P.} Tree structured independences for exponential Brownian functionals. {\em Stoch. Proc. Appl.} {\bf 119} (2009), 3798-3815.

\item {\sc Matsumoto, H., Yor, M.} An analogue of Pitman's $2M-X$ theorem for exponential Wiener functionals: Part II: The role of the generalized inverse Gaussian laws. {\em Nagoya Math. J.} {\bf 162} (2001), 65-86.

\item {\sc Matsumoto, H., Yor, M.}  Interpretation via Brownian motion of some independence properties between GIG and gamma variables. {\em Statist. Probab. Lett.} {\bf 61} (2003), 253-259.

\item {\sc Seshadri, V., Weso\l owski, J.} Mutual characterizations of the gamma and generalized inverse Gaussian laws by constancy of regression. {\em Sankhya, A} {\bf 63} (2001), 107-112.

\item {\sc Stirzaker, D.} {\em Stochastic Processes and Models}, Oxford Univ. Press, Oxford 2005.

\item {\sc Weso\l owski, J.} The Matsumoto-Yor independence property for GIG and gamma laws, revisited. {\em Math. Proc. Cambridge Philos. Soc.} {\bf 133} (2002), 153-161.

\item {\sc Witkowski, P., Weso\l owski, J.} Hitting times of Brownian motion and the Matsumoto-Yor property on trees. {\em Stoch. Proc. Appl.} {\bf 117} (2007), 1303-1315.

\end{enumerate}

\end{document}